\newcommand\BC{\mathcal C}
\newcommand\BZ{\mathbb Z}
\newcommand\BN{\mathbb N}
\newcommand{\cyc}[1]{\langle #1 \rangle}
\def\BHM#1.#2.#3.#4.{{^{#1}_{#3}\mathcal B^{#2}_{#4}}}
\newcommand\comm\curlyvee
\newcommand\cocomm\curlywedge
\DeclareMathOperator{\End}{End}
\theoremstyle{plain}
\newtheorem{thm}{Theorem}[section]
\newtheorem{lem}[thm]{Lemma}
\theoremstyle{definition}
\newtheorem{df}[thm]{Definition}
\newtheorem{example}[thm]{Example}
\theoremstyle{remark}
\newtheorem{rem}[thm]{Remark}
\crefname{lem}{Lemma}{Lemmas}
\crefname{thm}{Theorem}{Theorems}
\crefname{cor}{Corollary}{Corollaries}
\crefname{prop}{Proposition}{Propositions}
\crefname{example}{example}{examples}
\crefname{df}{Definition}{Definitions}
\crefname{equation}{equation}{equations}
\numberwithin{equation}{section}
\def\clap#1{\hbox to 0pt{\hss#1\hss}}
\def\D{\mathcal D}
\newcommand\inv{^{-1}}
\title{Examples of non-\ensuremath{FSZ} \MakeLowercase{p}-groups for primes greater than three}
\author{Marc Keilberg}
\email{keilberg@usc.edu}
\begin{document}
\begin{abstract}
For any prime $p>3$ and $j\in\BN$ we construct examples of non-$FSZ_{p^j}$ groups of order $p^{p^j+2j-1}$.  In the special case of $j=1$ this yields groups of order $p^{p+1}$, which is the minimum possible order for a non-$FSZ$ $p$-group.
\end{abstract}
\thanks{The author would like to thank the anonymous referee, whose comments helped improved the clarity of this work.}
\maketitle
\section*{Introduction}
The study of the representation categories of semisimple Hopf algebras, and many other more general contexts, have brought forth an interesting invariant of monoidal categories known as (higher) Frobenius-Schur indicators \citep{KSZ2,NS07a,NS07b,NS08,NS10,B1,AG,B2,FGSV,MaN,Nat05,Sc}.  These form generalizations of the classical Frobenius-Schur indicators for a finite group $G$, which for a character $\chi$ of $G$ over $\BC$ and any $m\in\BN$ are defined by
\begin{align}\label{eq:group-ind} \nu_m(\chi) = \frac{1}{|G|}\sum_{g\in G} \chi(g^m).\end{align}

When applied to the Hopf algebra $\D(G)$, the Drinfel'd double of the finite group $G$ over $\BC$, these indicators can be expressed entirely in group theoretical terms. \citet{PS16} has obtained an intriguing description of the FS-indicators of $\D(G)$ in terms of the character tables of centralizers, in particular.  Thus, while the FS-indicators are motivated by Hopf algebraic concerns, they also yield a new and interesting invariant for finite groups. Frobenius-Schur indicators are guaranteed to be algebraic integers in a certain cyclotomic field, and the Galois action on the field also acts on the indicators \citep[Proposition 3.3]{KSZ2}.  In full generality these indicators need not even be real numbers \citep[Example 7.5]{KSZ2}\citep{GMN}, but in the case of $\D(G)$ they are guaranteed to be so\citep[Remark 2.8]{IMM}.  All of the first examples computed in the case of group doubles \citep{K,Co,K2} yielded indicator values in $\BZ$.  Since the higher indicators for $G$ itself are classically known to be integers, this raised the question of whether or not the indicators for $\D(G)$ were always integers for arbitrary $G$.

\citet{IMM} investigated this question, ultimately finding that there were exactly 32 non-isomorphic groups of order $5^6$ with non-integer indicators.  They dubbed the property of having all integer FS-indicators the $FSZ$ property.  They also defined the $FSZ_m$ property, which holds whenever all $m$-th indicators are integers.  For our purposes, \cref{thm:IMM-equiv} below will be taken as our definition of the $FSZ_m$ properties, and therefore the $FSZ$ property.  \citet{IMM} also established that several large families of groups were $FSZ$, including but not limited to the symmetric groups $S_n$; $PSL_2(q)$ for a prime power $q$; and all regular $p$-groups.  On the other hand, the regular wreath product $\BZ_p\wr\BZ_p$ is an irregular $p$-group for all primes $p$, and this was shown to be $FSZ$ \citep[Example 4.4]{IMM}, thereby establishing that the class of $FSZ$ $p$-groups properly contains the class of regular $p$-groups.  It is interesting to ask what can be said about the properties of irregular non-$FSZ$ $p$-groups, or alternatively of irregular $FSZ$ $p$-groups.

It is the goal of this note to exhibit an infinite family of non-$FSZ$ $p$-groups for arbitrary primes $p>3$.   The construction, in particular, establishes that there are always non-$FSZ$ $p$-groups of order $p^{p+1}$ when $p>3$, which is well-known to be the minimum order possible for an irregular $p$-group.

We will take $\BN=\{1,2,...\}$ to be the set of positive integers.

\section{The construction}\label{sec:main}
Fix an odd prime $p$ and an integer $j\in\BN$.

Consider the abelian $p$-group \[P_{p,j}=\BZ_{p^{j+1}}\times \BZ_{p}^{p^{j}-2},\] with generators $a_1,...,a_{p^{j}-1}$ where $a_1$ has order $p^{j+1}$ and the rest have order $p$.  We define an endomorphism $b_{p,j}$ of $P_{p,j}$ by
\begin{align*}
  a_1 \mapsto&\ a_1 a_2\inv\\
  a_k\mapsto&\ a_k a_{k+1}, \ 1<k<p^{j}-1\\
  a_{p^{j}-1}\mapsto&\ a_{p^{j}-1} a_1^{-p^{j}}.
\end{align*}
It is convenient to write $b_{p,j}$ as a matrix $B_{p,j}$ which acts on the left in the obvious fashion, and whose first row of entries can be taken modulo $p^{j+1}$ and the remaining entries may be taken modulo $p$.  We have
\[B_{p,j}=\begin{pmatrix}
  1 & 0 & 0 & \cdots & 0 & 0 & -p^j\\
  -1&1&0&\cdots&0&0&0\\
  0&1&1&\cdots&0&0&0\\
  \vdots&&&&&&\vdots\\
  0&0&0&\cdots&1&1&0\\
  0&0&0&\cdots&0&1&1
\end{pmatrix}.\]
The entries of $B_{p,j}^k$ for $1\leq k\leq p^{j}-2$ are then naturally described by the values
\[T_{i,k} = {i\choose k}\] of Pascal's Triangle.  For example
\[B_{p,j}^2=\begin{pmatrix}
  1 & 0 & 0 & \cdots & 0 & -p^j & -2p^j\\
  -2&1&0&\cdots&0&0&0\\
  -1&2&1&\cdots&0&0&0\\
  \vdots&&&&&&\vdots\\
  0&0&0&\cdots&2&1&0\\
  0&0&0&\cdots&1&2&1
\end{pmatrix}\]
and $B_{p,j}^{p^{j}-2}$ is given by
\[\begin{pmatrix}
  1 & -p^j &  \cdots & -T_{p^{j}-2,p^{j}-4}p^j & -T_{p^{j}-2,p^{j}-3}p^j\\
  -T_{p^{j}-2,1}&1&\cdots&0&0\\
  -T_{p^{j}-2,2}&T_{p^{j}-2,1}&\cdots&0&0\\
  \vdots&\vdots&\vdots&\vdots&\vdots\\
  -T_{p^{j}-2,p^{j}-3}&T_{p^{j}-2,p^{j}-3}&\cdots&1&0\\
  -1&T_{p^{j}-2,p^{j}-3}&\cdots&T_{p^{j}-2,1}&1
\end{pmatrix}.\]
Indeed, the entries of $B_{p,j}^k$ are determined by Pascal's triangle for arbitrary $k$, just that for $k> p^{j}-2$ we can no longer fit entire rows of the triangle in the rows or columns.  Nevertheless the pattern is straightforward. We remind the reader that $T_{p^t,k}={p^t\choose k}$ is divisible by $p$ for all $t\in\BN$ and $0<k<p^t$, and is equal to $1$ for $k=0$ and $k=p^t$.  This elementary property is essential to several of the calculations we will do, as it regularly ensures that many entries are zero, and will be used without further mention.

\begin{lem}\label{lem:b-is-aut}
The endomorphism $b_{p,j}\in\End(P_{p,j})$ is an automorphism of order $p^j$.
\end{lem}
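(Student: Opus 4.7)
The plan is to write $N = B_{p,j} - I$ and analyze powers of $B_{p,j}$ via the binomial theorem, since $I$ and $N$ commute. The matrix $N$ has a ``signed cyclic shift'' structure on the standard column basis: $N e_1 = -e_2$, $N e_k = e_{k+1}$ for $1 < k < p^j - 1$, and $N e_{p^j - 1} = -p^j e_1$. Tracing once around the cycle yields the key identity $N^{p^j - 1} = p^j I$, because the two sign flips (at $e_1 \to -e_2$ and at the wraparound) cancel and only the scalar $p^j$ from the wraparound survives.

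Expanding $B_{p,j}^{p^j} = \sum_{k=0}^{p^j} \binom{p^j}{k} N^k$, the $k = p^j - 1$ and $k = p^j$ terms evaluate to $p^{2j} I$ and $p^j N$ respectively, while for $1 \leq k \leq p^j - 2$ the coefficient $\binom{p^j}{k}$ is divisible by $p$. Every correction term vanishes modulo the order of each generator: rows $2,\dots,p^j - 1$ of $B_{p,j}$ are read modulo $p$, which kills any factor of $p$; row 1 is read modulo $p^{j+1}$, but any nonzero row-1 entry of $N^k$ for $k < p^j - 1$ must arise from a path traversing the $e_{p^j-1}\to -p^j e_1$ step and so already carries a factor $-p^j$, which combined with the extra $p$ from the binomial coefficient gives divisibility by $p^{j+1}$. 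This yields $B_{p,j}^{p^j} = I$, so $b_{p,j}$ is invertible --- and thus an automorphism --- of order dividing $p^j$.

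To see the order is exactly $p^j$, I apply $B_{p,j}^{p^{j-1}}$ to $e_1$. Since $p^{j-1} < p^j - 1$ no wraparound is reached, so $B_{p,j}^{p^{j-1}} e_1 = e_1 - \sum_{k=1}^{p^{j-1}} \binom{p^{j-1}}{k} e_{k+1}$; reducing modulo $p$, only the $k = p^{j-1}$ term survives, giving $B_{p,j}^{p^{j-1}} e_1 \equiv e_1 - e_{p^{j-1}+1} \pmod p$. Since $a_{p^{j-1}+1}$ has order exactly $p$ in $P_{p,j}$, this forces $b_{p,j}^{p^{j-1}}(a_1) \neq a_1$. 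The main obstacle throughout is the row-1 bookkeeping in the previous paragraph, where the corrections must vanish modulo $p^{j+1}$ rather than merely modulo $p$; the required extra power of $p$ is supplied exactly by the single exceptional entry $-p^j$ in the top-right corner of $B_{p,j}$, ensuring that every return of $N^k$ to row 1 is automatically suppressed by $p^j$.
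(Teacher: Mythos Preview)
Your proof is correct and takes essentially the same approach as the paper: both write $B = I + N$, expand $B^{p^j}$ binomially, and use that the intermediate binomial coefficients are divisible by $p$ together with the fact that the first-row entries of $N^k$ already carry a factor $p^j$ from the wraparound, so every cross term vanishes modulo $p^{j+1}$ in row~1 and modulo $p$ elsewhere. The only notable variation is in the lower bound on the order: the paper verifies $B^k \neq I$ for every $1 \leq k \leq p^j-1$ (via the $-1$ entries in the first column and the $(1,1)$ entry of $B^{p^j-1}$), whereas you more economically check only $B^{p^{j-1}} \neq I$ and use that the order, dividing $p^j$, must be a power of $p$.
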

\begin{proof}
  The formula for $B_{p,j}^{p^j-2}$ above shows that the upper left entry for $B^{p^j-1}=B\cdot B^{p^j-2}$ is $p^j+1$, which is not congruent to $1$ modulo $p^{j+1}$.  Since the powers $B^k$ for $1\leq k < p^j-1$ have a $-1$ entry in the first column, it follows that $B^{k}$ is not the identity matrix for $1\leq k\leq p^j-1$.  To finish showing that $B$ has order $p^j$, we write
  \[ B = I + S,\]
  where $I$ is the identity matrix and $S$ is the matrix
  \begin{align}\label{eq:circulant} S = \begin{pmatrix}
    0 & 0 &\cdots& 0 & -p^j\\
    -1 & 0 &\cdots & 0 & 0\\
    0 & 1 & \cdots & 0 & 0\\
    \vdots&\vdots&\vdots&\vdots&\vdots\\
    0&0&\cdots&1&0
  \end{pmatrix}.\end{align}
  By the binomial formula,
  \[ B^{p^j} = I + S^{p^j} + \sum_{k=1}^{p^{j-1}} {p^j\choose k} S^k.\]
  The binomial coefficients in the summation are all divisible by $p$, whence only the first row of the powers $S^k$ can possibly contribute a non-zero term to the summation.  $S$ itself behaves very much like a circulant matrix, and in particular can have its arbitrary powers computed easily: for each successive power, shift the entries of the previous power to the left one column, set the last column to all zeros, and multiply the first column by -1.  For example,
  \[ S^2 = \begin{pmatrix}
    0&0&\cdots&0&-p^j&0\\
    0&0&\cdots&0&0&0\\
    -1&0&\cdots&0&0&0\\
    \vdots&\vdots&\vdots&\vdots&\vdots&\vdots\\
    0&0&\cdots&1&0&0
  \end{pmatrix}.\]
  We can then see that, for $1\leq k<p^j$, the first row of $S^k$ has a single non-zero entry which is equal to $-p^j$ for $k<p^j-1$, and is equal to $p^j$ for $k=p^j-1$.  Since this is multiplied by the binomial coefficient and taken modulo $p^{j+1}$, we see that every term in the summation vanishes.  Lastly we see that $S^{p^j}$ is the zero matrix.  Thus, $B^{p^j}$ is the identity matrix, as desired.
\end{proof}

We can now define the family of groups for which we wish to study the $FSZ$ properties.

\begin{df}
  Let $p$ be an odd prime and $j\in\BN$.  Define \[S(p,j) = P_{p,j}\rtimes \cyc{b_{p,j}}.\]  This is a group of order $p^{p^{j}+2j-1}$.
\end{df}
By considering the eigenvectors of $B$ for the eigenvalue $1$, we see that $S(p,j)$ has center $\cyc{a_1^p}\cong\BZ_{p^j}$.  We identify $P_{p,j}$ and $\cyc{b_{p,j}}$ as subgroups of $S(p,j)$ in the usual fashion, and for simplicity we denote $b_{p,j}$ by simply $b$ whenever convenient, and similarly for $B_{p,j}$.

The group $S(p,j)$ is defined in such a way as to make computing $p^j$-th powers relatively easy, after a bit of initial work.  To help us investigate how to take $p^j$-th powers in $S(p,j)$, we introduce the following.
\begin{df}\label{def:y-def}
For $0\leq k < j$ we define the matrices
\[Y_{p,j}(p^k) = \sum_{m=0}^{p^{j-k}-1} B^{m p^k}.\]
\end{df}
These can be viewed as endomorphisms of $P_{p,j}$ by acting on the left, in the same fashion that $B$ acts.

\begin{lem}
  We have a block decomposition with a $1\times 1$ entry in the upper left corner
  \begin{align}\label{eq:Y1mat}
    Y_{p,j}(1) =  \begin{pmatrix} 2p^j & 0\\ 0&0\end{pmatrix}.
  \end{align}
\end{lem}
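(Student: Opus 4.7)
My plan is to exploit the decomposition $B=I+S$ from \cref{lem:b-is-aut}. By the binomial theorem and the hockey-stick identity $\sum_{m=k}^{p^j-1}\binom{m}{k}=\binom{p^j}{k+1}$, one rewrites
\[Y_{p,j}(1)=\sum_{m=0}^{p^j-1}(I+S)^m=\sum_{k=0}^{p^j-1}\binom{p^j}{k+1}S^k.\]
The key divisibility input is that $\binom{p^j}{k+1}$ equals $p^j$ at $k=0$, equals $1$ at $k=p^j-1$, and is divisible by $p$ at every intermediate value.

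Next, I would evaluate $Y_{p,j}(1)$ on each generator $a_\ell$ of $P_{p,j}$ individually. The previous lemma records $Sa_1=-a_2$, $Sa_k=a_{k+1}$ for $2\leq k<p^j-1$, and $Sa_{p^j-1}=-p^j a_1$, so $S$ acts on generators as a signed shift. Iterating, $S^k a_\ell$ is either $\pm a_{\ell+k}$ while the shift stays in range, equals $\pm p^j a_1$ at one critical value of $k$, and vanishes for all larger $k$ (since $p^j a_m=0$ whenever $m\geq 2$).

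Applied to $a_1$, only the $k=0$ and $k=p^j-1$ terms avoid being killed by the intermediate $p$-divisibility of the binomial coefficient; each yields $p^j a_1$, for a total of $2p^j a_1$. Applied to $a_\ell$ with $\ell\geq 2$, the $k=0$ contribution $p^j a_\ell$ vanishes since $a_\ell$ has order $p$; the intermediate contributions are either products $\binom{p^j}{k+1}a_{\ell+k}$ of a multiple of $p$ with an element of order $p$, or of the form $\pm p^j\binom{p^j}{k+1}a_1$ which is divisible by $p^{j+1}$; and $S^{p^j-1}a_\ell=0$ because the shift has already run off the end. Hence $Y_{p,j}(1)a_1=2p^j a_1$ and $Y_{p,j}(1)a_\ell=0$ for $\ell\geq 2$, which is precisely the claimed block form.

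I do not anticipate a serious obstacle: the main bookkeeping challenge is juggling the two moduli ---$p^{j+1}$ in the first coordinate of $P_{p,j}$ versus $p$ elsewhere--- but the divisibility facts above suffice to kill every contribution off the $(1,1)$ entry.
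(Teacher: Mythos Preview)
Your argument is correct. The hockey-stick identity rewrites $Y_{p,j}(1)$ as $\sum_{k=0}^{p^j-1}\binom{p^j}{k+1}S^k$, and your case analysis on generators goes through: for $a_1$ only the endpoint terms $k=0$ and $k=p^j-1$ survive, each contributing $p^j a_1$; for $a_\ell$ with $\ell\geq 2$ every term dies, either because an order-$p$ generator is hit by a coefficient divisible by $p$, or because the one term landing back in the $a_1$-coordinate carries a factor $p^j\binom{p^j}{p^j-\ell+1}\equiv 0\bmod p^{j+1}$, or because the shift has already annihilated the vector.

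This is a genuinely different route from the paper's proof. The paper argues structurally: since $BY_{p,j}(1)=Y_{p,j}(1)$, every column of $Y_{p,j}(1)$ is a $1$-eigenvector of $B$, forcing the lower rows to vanish; then $Y_{p,j}(1)S=0$ is used to kill the off-diagonal part of the top row; the $(1,1)$ entry is computed separately from the explicit form of $B^{p^j-1}$. Your approach trades this eigenvector/commutation argument for a direct expansion, putting all the weight on the $p$-divisibility of the middle binomial coefficients $\binom{p^j}{k+1}$. The paper's method is more conceptual and explains \emph{why} $Y_{p,j}(1)$ is supported on a single entry, while yours is more self-contained and avoids having to identify the $1$-eigenspace of $B$.
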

\begin{proof}
We have the identity $B Y_{p,j}(1) = Y_{p,j}(1)$.  Thus the columns of $Y_{p,j}(1)$ are all eigenvectors of $B$ with eigenvalue 1, and it is easily checked that all such eigenvectors have zeroes in every entry except (possibly) the first one, which must be divisible by $p$.  It follows that we have a block decomposition with a $1\times 1$ entry in the upper left corner given by
\[ Y_{p,j} =  \begin{pmatrix} c*p & v\\ 0&0\end{pmatrix}\]
for some integer $c$ and some (row) vector of integers $v$.  Indeed, every entry of $v$ must be divisible by $p$.    As noted in the proof of \cref{lem:b-is-aut}, it is easily seen that the $(1,1)$ entry of $B_{p,j}^{p^j-1}=B*B^{p^j-2}$ is exactly $p^j+1$, from which it follows that the $(1,1)$ entry of $Y_{p,j}(1)$ is $2p^j$.  It remains to show that $v$ is the zero vector (modulo $p^{j+1}$).

To this end we note that we also have $Y_{p,j}(1)B = Y_{p,j}(1)$, or equivalently that $Y_{p,j}(1)(B-I)=0$, where $I$ is the identity matrix.  As in \cref{lem:b-is-aut}, we write $S=B-I$, which takes the form given in \eqref{eq:circulant}.  Thus the row vectors of $Y_{p,j}$ are annihilated by $S$ when acted on from the right.  Writing $v=(v_1,...,v_{p^{j}-2})$, we see that $(2p^j,v)S = (-v_1,v_2,...,v_{p^j-2},0) = 0 \mod p^{j+1}$.  This shows that $v$ is the zero vector, as desired, and so completes the proof.
\end{proof}

We also have the following relations for the remaining $Y_{p,j}(p^k)$.
\begin{lem}
For all $0<k<j$ we have a block decomposition with a $1\times 1$ entry in the upper left corner
\begin{align}\label{eq:Ypmats}
  p^k Y_{p,j}(p^k) =& \begin{pmatrix}p^j&0\\0&0\end{pmatrix}, \ 1\leq k < j.
\end{align}
\end{lem}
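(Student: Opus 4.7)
My plan is to compute $Y_{p,j}(p^k)$ entry by entry as an integer matrix and observe that the leading factor of $p^k$ annihilates everything outside the $(1,1)$ corner. The entries in rows $2$ through $p^j - 1$ are reduced modulo $p$, so they vanish automatically after multiplication by $p^k$ for $k \geq 1$; this disposes of the entire lower portion of the asserted identity for free.

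For the top row I reuse the splitting $B = I + S$ from \cref{lem:b-is-aut}. Iterating $Se_1 = -e_2$, $Se_i = e_{i+1}$ for $2 \leq i < p^j - 1$, and $Se_{p^j - 1} = -p^j e_1$ shows that for $0 < \ell < p^j$ the first row of $S^\ell$ has at most one nonzero entry: $(S^\ell)_{1,1} = p^j$ exactly when $\ell = p^j - 1$, and $(S^\ell)_{1,i} = -p^j$ exactly when $\ell = p^j - i$ for $i \geq 2$; higher powers vanish along the first row. Expanding $B^n = (I+S)^n$ by the binomial theorem then gives, modulo $p^{j+1}$,
\[ (B^n)_{1,1} = 1 + \binom{n}{p^j - 1}p^j, \qquad (B^n)_{1,i} = -\binom{n}{p^j - i}p^j \text{ for } i \geq 2. \]

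In particular, the top-row entries of $B^n$ with $i \geq 2$ are all integer multiples of $p^j$, a property preserved by the summation defining $Y_{p,j}(p^k)$, so a further factor of $p^k$ produces a multiple of $p^{j+k}$, which is zero modulo $p^{j+1}$ for $k \geq 1$. For the $(1,1)$ entry, $n = m p^k$ ranges over $0, p^k, \ldots, p^j - p^k$, and the maximum value satisfies $p^j - p^k \leq p^j - p < p^j - 1$; hence $\binom{n}{p^j - 1}$ vanishes throughout, each summand contributes exactly $1$, and the total is $p^{j-k}$. Multiplying by $p^k$ yields $p^j$, as required. The only real obstacle is keeping the indexing straight and verifying that the range of $n$ stays strictly below the ``wraparound'' index $p^j - 1$, which is precisely what the hypothesis $k \geq 1$ guarantees; at $k = 0$ the range does include $p^j - 1$, producing the extra $p^j$ that accounts for the $2p^j$ of the preceding lemma.
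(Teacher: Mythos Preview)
Your proof is correct. The overall shape matches the paper's: kill rows $2,\ldots,p^j-1$ by the factor $p^k$, then analyze the first row and show the off-$(1,1)$ entries are divisible by $p^j$ while the $(1,1)$ entries sum to $p^{j-k}$.

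The execution differs mildly. The paper first pins down the shape of $B^{p^k}$ itself (first row $(1,0,\ldots,0,-p^j)$, ones on the diagonal, $\pm1$ below) and then argues inductively that this structure forces every first-row entry of $(B^{p^k})^m$, except the first, to be divisible by $p^j$. You instead expand $B^n=(I+S)^n$ once and for all, read off explicit formulas $(B^n)_{1,1}=1+\binom{n}{p^j-1}p^j$ and $(B^n)_{1,i}=-\binom{n}{p^j-i}p^j$ modulo $p^{j+1}$, and then specialize to $n=mp^k$. Your route is a little more explicit and has the pleasant side effect of explaining the $2p^j$ in the $k=0$ lemma as the single extra summand at $n=p^j-1$; the paper's route avoids tracking the powers of $S$ but needs the separate observation about the structure of $B^{p^k}$. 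One small remark: your phrase ``higher powers vanish along the first row'' is literally true only modulo $p^{j+1}$ (over $\BZ$ one picks up further multiples of $p^{2j}$), but since every $n=mp^k$ you use satisfies $n\le p^j-p<p^j-1$, those higher powers never enter the binomial expansion anyway.
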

\begin{proof}
  Since all rows but the first are taken modulo $p$, the scalar multiplication by $p^k$ automatically forces all entries of $Y_{p,j}(p^k)$ other than possibly those on the first row to be zero.  In particular, we have established the bottom two rows of the desired decomposition.  Indeed, the scalar multiplication by $p^k$ means we need only consider the elements in the first row to calculate $p^k Y_{p,j}(p^k)$.

  We now consider the entries in the first row. We easily see that, by assumptions on $k$, the $(1,1)$ entry of every matrix in the sum defining $Y_{p,j}(p^k)$ is $1$, and there are $p^{j-k}$ matrices in the summation, whence the $(1,1)$ entry of $p^k Y_{p,j}(p^k)$ is $p^j$.  Recall that the first row is always taken modulo $p^{j+1}$.  In the first row of $B^{p^k}$ the only nonzero entries are the first and last ones, which are 1 and $-p^j$ respectively.  Indeed, all entries on the diagonal of $B^{p^k}$ are $1$, and the only non-zero entries below the diagonal are $\pm 1$ modulo $p$.  If follows that every entry in the first row of any power of $B^{p^k}$, other than the first entry, is divisible by $p^j$.  Subsequently, every entry in the first row of $p^k Y_{p,j}(p^k)$, except the first one, is divisible by $p^{j+k}\equiv 0 \mod p^{j+1}$ since $k\geq 1$.  This completes the proof.
\end{proof}

We can now state how these matrices are used to describe arbitrary $p^j$-th powers in $S(p,j)$.  Namely, fixing $q\in P_{p,j}$ and $b^k\in\cyc{b}$ with $|b^k|=p^{j-t}$, then
\begin{align}\label{eq:powers}
  (qb^k)^{p^j} =&\ p^{t}Y_{p,j}(p^t)q.
\end{align}
The fact that $P_{p,j}$ is abelian is essential to this formula.  In particular, it is needed to be sure that the value depends only on the order of $b^k$.  The reader may find it worthwhile to see how this holds in the particular case of $k\equiv -1\mod p^{j+1}$, as we have previously noted that this is the only power of $B$ for which the $(1,1)$ entry is not congruent to $1$.  As a result of this identity, all $p^j$-th powers in $S(p,j)$ yield the subgroup $\cyc{a^{p^j}}\subseteq Z(S(p,j))$.

\begin{rem}
The matrices $p^k Y_{p,j}(p^k)$, in particular $Y_{p,j}(1)$, are higher dimensional analogues of the integer parameter $d$ appearing in \citep{K}.  The parameter $d$ controlled the existence of negative indicators in the double of the groups under consideration in \citep{K}, in much the same way that $Y_{p,j}$ will dictate the existence of non-integer indicators here.  More generally, such objects naturally arise when considering the $FSZ$ property for groups of the form $A\rtimes C$ where $A$ is abelian and $C$ is cyclic, such as in \citep[Example 4.4]{IMM}.
\end{rem}

Now that we understand how to take $p^j$-th powers in $S(p,j)$, we can begin investigating what \citet{IMM} called the $FSZ_{p^j}$ property of $S(p,j)$.  We recall the following.
\begin{df}
  For any group $G$, $n\in\BN$, and $g,u\in G$, define
  \[ G_n(u,g) = \{ a\in G \ : \ a^n = (au\inv)^n =g \}.\]
\end{df}
Of necessity, $G_n(u,g)\neq \emptyset$ implies $[u,g]=1$, and indeed for fixed $g$ they are subsets of $C_G(g)$.  These sets characterize the $FSZ_n$ properties, as shown by the following.
\begin{thm}\citep[Corollary 3.2]{IMM}\label{thm:IMM-equiv}
    Let $n\in\BN$ and $G$ a finite group.  Then $G$ is an $FSZ_n$-group if and only if for all commuting pairs of elements $u,g$ and all integers $m$ coprime to $|G|$ we have
    \[ |G_n(u,g)|=|G_n(u,g^m)|.\]
\end{thm}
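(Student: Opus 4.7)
The plan is to combine the known group-theoretic formula for higher Frobenius--Schur indicators of $\D(G)$ with Galois theory in the cyclotomic field $\BQ(\zeta_{|G|})$.

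Recall that the irreducible modules of $\D(G)$ are parametrized by pairs $(g,\chi)$ with $g$ a conjugacy class representative of $G$ and $\chi$ an irreducible character of $C_G(g)$. The key input is the Kashina--Sommerh\"auser--Zhu formula (or Schauenburg's reformulation), which writes the $n$-th indicator as a pairing of $\chi$ against the integer-valued class function $u\mapsto |G_n(u,g)|$ on $C_G(g)$; schematically,
\[
\nu_n(V_{(g,\chi)}) \;=\; \frac{1}{|C_G(g)|} \sum_{u \in C_G(g)} |G_n(u,g)|\,\chi(u).
\]

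Each such indicator lies in $\BQ(\zeta_{|G|})$ and is an algebraic integer, hence $\nu_n(V_{(g,\chi)}) \in \BZ$ if and only if it is fixed by every Galois automorphism $\sigma_m \in \operatorname{Gal}(\BQ(\zeta_{|G|})/\BQ)$ with $\gcd(m,|G|)=1$, acting by $\zeta_{|G|}\mapsto \zeta_{|G|}^m$. Applying $\sigma_m$ to the displayed formula and using $\sigma_m(\chi(h)) = \chi(h^m)$, combined with the general principle that Galois permutes the simples of $\D(G)$ via $V_{(g,\chi)} \mapsto V_{(g^m,\chi^{(m)})}$ for a suitable $\chi^{(m)}\in\operatorname{Irr}(C_G(g^m))=\operatorname{Irr}(C_G(g))$, one obtains
\[
\sigma_m\bigl(\nu_n(V_{(g,\chi)})\bigr) \;=\; \frac{1}{|C_G(g)|} \sum_{u \in C_G(g)} |G_n(u,g^m)|\,\chi(u).
\]
Thus $FSZ_n$ is equivalent to $\sum_u \chi(u)\bigl(|G_n(u,g)|-|G_n(u,g^m)|\bigr)=0$ for every irreducible $\chi$ of $C_G(g)$ and every $m$ coprime to $|G|$.

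Finally, letting $\chi$ range over all irreducible characters and invoking character orthogonality on $C_G(g)$ upgrades this equality of inner products to the pointwise identity $|G_n(u,g)|=|G_n(u,g^m)|$ for every $u\in C_G(g)$. The case $u\notin C_G(g)$ is automatic: $a^n=g$ forces $a\in C_G(g)$, so $G_n(u,g)=\emptyset$ unless $[u,g]=1$. The main technical obstacle is the identification $\sigma_m\nu_n(V_{(g,\chi)})=\nu_n(V_{(g^m,\chi^{(m)})})$; the direct action of $\sigma_m$ on the displayed sum visibly sends $\chi\mapsto(u\mapsto\chi(u^m))$, so one must combine this with the Galois-permutation of simples (equivalently, with a bijection on the defining solution sets of $G_n$ obtained by raising to the $m$th power) in order to convert the $\chi$-twist into the $g$-twist appearing in the statement. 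Once this identification is in place, the remaining steps are routine applications of orthogonality.
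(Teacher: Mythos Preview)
The paper does not contain a proof of this statement: it is quoted verbatim as \emph{Corollary~3.2} of the cited paper of Iovanov--Mason--Montgomery and is used as a black box (indeed, the introduction says it will be taken as the \emph{definition} of $FSZ_m$). So there is no ``paper's own proof'' to compare your proposal against.

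That said, your outline is the right strategy and matches how the result is obtained in the cited source: one writes the $n$-th indicator of $V_{(g,\chi)}$ as the inner product of $\chi$ with the integer-valued class function $u\mapsto |G_n(u,g)|$ on $C_G(g)$, observes that integrality is equivalent to invariance under every $\sigma_m\in\operatorname{Gal}(\BQ(\zeta_{|G|})/\BQ)$, and then uses column orthogonality to pass from ``all inner products agree'' to ``the class functions agree pointwise.'' You have also correctly isolated the only nontrivial step. Applying $\sigma_m$ directly to the sum replaces $\chi(u)$ by $\chi(u^m)$, which after reindexing yields the condition $|G_n(u,g)|=|G_n(u^{m'},g)|$ (a twist in $u$), not the condition $|G_n(u,g)|=|G_n(u,g^m)|$ (a twist in $g$) asserted in the statement. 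Converting one into the other is exactly the ``$\chi$-twist to $g$-twist'' issue you flag; note that the naive map $a\mapsto a^m$ does \emph{not} give a bijection $G_n(u,g)\to G_n(u^m,g^m)$, since $(au^{-1})^m\neq a^m u^{-m}$ in general. In IMM this is handled by an auxiliary combinatorial bijection among the solution sets (their Lemma~2.7 and surrounding discussion); your invocation of the Galois permutation of simples of $\D(G)$ is an equally valid route, but you should be explicit that this permutation sends $(g,\chi)$ to $(g^m,\chi^{(m)})$ with a specific $\chi^{(m)}$, and then explain why, after running orthogonality over \emph{all} $\chi$, the presence of $\chi^{(m)}$ rather than $\chi$ is harmless. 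Once that bookkeeping is pinned down, the rest of your argument goes through.
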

We can now state and prove the main result of the paper.

\begin{thm}\label{thm:Gsets}
  Let notation be as above and set $G=S(p,j)$ for any odd prime $p>3$ and $j\in\BN$.  Then $G_{p^j}(ba_1,a_1^{p^j})=\emptyset$ and $G_{p^j}(ba_1,a_1^{2p^j})\neq\emptyset$.

  In particular, $S(p,j)$ is non-$FSZ_{p^j}$.
\end{thm}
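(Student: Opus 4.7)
The plan is to parameterize a general element of $G$ as $a = q b^k$ with $q \in P_{p,j}$ and $0 \leq k < p^j$, and reduce both equations $a^{p^j} = g$ and $(a(ba_1)^{-1})^{p^j} = g$ to elementary congruences modulo $p$ on the first coordinate $q_1$ of $q$, conditioned on the divisibility of $k$ and $k-1$ by $p$.  From \eqref{eq:Y1mat}, \eqref{eq:Ypmats}, together with the direct computation when $k = 0$, one has $(qb^k)^{p^j} = a_1^{2 p^j q_1}$ if $p \nmid k$ and $(qb^k)^{p^j} = a_1^{p^j q_1}$ if $p \mid k$.  Moreover $a (ba_1)^{-1} = (q \cdot B^k(a_1)^{-1}) \cdot b^{k-1}$, and the first coordinate of $q \cdot B^k(a_1)^{-1}$ is $q_1 - (B^k)_{1,1} \equiv q_1 - 1 \bmod p$, since $(B^k)_{1,1} \equiv 1 \bmod p$ for every $k$ (as implicit in the proof of \cref{lem:b-is-aut}).

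Writing $g = a_1^{e p^j}$ with $e \in \{1, 2\}$, the two power equations become congruences of the form $\alpha q_1 \equiv e$ and $\beta(q_1 - 1) \equiv e \bmod p$, where $(\alpha, \beta) \in \{1, 2\}^2$ is determined by whether $p$ divides $k$ and $k-1$, respectively.  Since at most one of $k, k-1$ is divisible by $p$, three sub-cases arise: (i) $p \nmid k$ and $p \nmid (k-1)$ force $2 \equiv 0 \bmod p$, which is impossible; (ii) $p \nmid k$ and $p \mid (k-1)$ force $e \equiv -2 \bmod p$; (iii) $p \mid k$ (so $p \nmid (k-1)$) forces $e \equiv 2 \bmod p$.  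For $e = 1$ the last two require $p \mid 3$ and $p \mid 1$ respectively, and neither holds for $p > 3$; hence $G_{p^j}(ba_1, a_1^{p^j}) = \emptyset$.  For $e = 2$, case (iii) is satisfied by $a = a_1^2$ (so $q_1 = 2$ and $k = 0$): directly, $a^{p^j} = a_1^{2 p^j}$, and $a(ba_1)^{-1} = a_1 b^{-1}$, whose $p^j$-th power is again $a_1^{2 p^j}$ via \eqref{eq:powers}.

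The non-$FSZ_{p^j}$ conclusion then follows immediately from \cref{thm:IMM-equiv}, applied to the commuting pair $u = ba_1$, $g = a_1^{p^j}$ (central in $G$), and the integer $m = 2$, which is coprime to the prime power $|G|$.  The delicate sub-case is $k \equiv 1 \bmod p$, where both $a^{p^j}$ and $(a(ba_1)^{-1})^{p^j}$ use the factor-$1$ form of \eqref{eq:powers}, yielding the constraint $e \equiv -2 \bmod p$; this is precisely where the hypothesis $p > 3$ is essential, since $p = 3$ would also admit $e = 1$ there, producing unwanted solutions for $g = a_1^{p^j}$.
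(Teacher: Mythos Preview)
Your argument is correct and follows essentially the same strategy as the paper: parameterize elements by their $P_{p,j}$-part and $b$-exponent, reduce both power conditions to congruences on the first coordinate via \cref{eq:powers,eq:Y1mat,eq:Ypmats}, and do a case split on the $b$-exponent. Your presentation is in fact a bit cleaner: by observing immediately that $(qb^k)^{p^j}$ depends only on $q_1\bmod p$ and on whether $p\mid k$, you collapse the paper's five cases (which separate the orders $1$, $p^{j-t}$ for $0<t<j$, and $p^j$) into three, and your reduction of $(B^k)_{1,1}$ modulo $p$ rather than modulo $p^{j+1}$ sidesteps the special attention the paper pays to the exponent where $(B^k)_{1,1}=p^j+1$.
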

\begin{proof}
  We first note that the assumption $p>3$ is necessary, since when $p=3$ we have $a_1^{2p^j}=a_1^{-p^j}$ and by \citep[Lemma 2.7]{IMM} we always have a bijection $G_n(u,g)\to G_n(u,g\inv)$ for any $G$, $n\in\BN$, and $u,g\in G$.

  Fix $u=ba_1$ and set $Y=\cyc{a_2,...,a_{p^j-1}}$ for the remainder of the proof.  Every element $a\in G$ can be uniquely written in the form $a=a_1^{j_1}y b^{-k}\in G$ for some $y\in Y$.  By \cref{eq:powers,eq:Y1mat,eq:Ypmats} the value of $a^{p^j}$ does not depend on $y$, so we may suppress elements of $Y$ for the rest of the proof.  It follows that when determining the membership of $G_{p^j}(ba_1,g)$ we will naturally break things down into cases, depending on the orders of $b^k$ and $b^{k+1}$.

  First consider the case that $|b^k|=1$.  Then \[a^{p^j}=a_1^{j_1 p^j}=g,\] while \[(au\inv)^{p^j} = Y_{p,j}(1)(a_1^{j_1-1}) = a_1^{2(j_1-1)p^j}=g.\]  These equalities are consistent if and only if $j_1\equiv 2\bmod p$.  In particular, we have no contribution from elements of this form when $g=a_1^{p^j}$, but do have contributions from such elements when $g=a_1^{2p^j}$.

  Now suppose $|b^k|=|b^{k+1}|=p^j$.  Then \[a^{p^j} = Y_{p,j}(1)a_1^{j_1} = a_1^{2j_1 p^j}\] and \[(au\inv)^{p^j} = Y_{p,j}(1)a_1^{j_1-1} = a_1^{2(j_1-1)p^j}.\]  We point out to the reader that this identity holds even when $k=1$, as in this case putting $au\inv$ into the desired form requires we apply $B^{-1}$ to $a_1$, and we have previously noted that this is the unique power of $B$ whose (1,1) is $p^j+1$ instead of 1.  Regardless, these values can never be equal, so we have no contributions from elements of this form to the sets $G_{p^j}(u,g)$ for any choice of $g$.

  Next suppose $k\equiv -1\mod p^j$.  Then \[a^{p^j} = Y_{p,j}(1)a_1^{j_1} = a_1^{2j_1 p^j}\] while \[(au\inv)^{p^j} = a_1^{(j_1-1) p^j}.\]  These are equal if and only if $j\equiv -1\mod p$.  Since $p>3$, we conclude that for  $g\in\{a_1^{p^j},a_1^{2p^j}\}$ there are no contributions from elements of this form.

  For the next case, suppose $|b^k|=p^{j-t}$ for some $0<t<j$, which implies $|b^{k+1}|=p^j$.  It follows that \[a^{p^j}=a_1^{j_1 p^j}\] and \[(au\inv)^{p^j} = a_1^{2(j_1-1)p^j}.\]  These are equal if and only if $j_1\equiv 2\mod p$.  Therefore for $g=a_1^{p^j}$ there are no contributions from elements of this form.  But for $g=a_1^{2p_j}\}$ contributions from elements of this form do exist.

  Finally, suppose $|b^{k+1}|=p^{j-t}$ for some $0<t<j$, which implies $|b^k|=p^j$.  Then we have
  \[ a^{p^j} = a_1^{2p^j j_1}\]
  while
  \[ (au\inv)^{p^j} = a_1^{p^j (j_1-1)}.\]
  These values are equal if and only if $j_1\equiv -1\mod p$, so again since $p>3$ we conclude that for $g\in\{a_1^{p^j},a_1^{2p^j}\}$ there are no contributions from elements of this form.

  This completes the proof except for the final claim, which follows immediately from \cref{thm:IMM-equiv}.
\end{proof}
\begin{example}
  For $p>3$ we have that $S(p,1)$ is a group of order $p^{p+1}$ that is not $FSZ_p$, and this is the minimum possible order for any non-$FSZ$ $p$-group.  Indeed, $S(5,1)$ is SmallGroup($5^6$,632) in \citet{GAP4}, which is the smallest id number amongst the 32 non-$FSZ$ groups of order $5^6$ found by \citet{IMM}.

  For $p>3$ and $j>1$ we do not know if $S(p,j)$ has minimal order amongst the non-$FSZ_{p^j}$ $p$-groups.
\end{example}
\begin{example}
  \citet{IMM} used \citet{GAP4} to verify that there are no non-$FSZ$ 2-groups of order at most $2^9$.  The author has verified, with the help of the GAP functions in \citep{IMM,PS16}, that there are no non-$FSZ$ 3-groups of order at most $3^7$.  It remains an open question if non-$FSZ$ 2-groups or 3-groups exist, and if they do what their minimum orders are.  The constructions here, and several attempts at modifications thereof, run into the usual issues for the primes $2,3$.
\end{example}

\bibliographystyle{plainnat}
\bibliography{../references}
\end{document}